\newtheorem{theor}{Theorem}[section]
\newcounter{tmp}
\newtheorem{lem}[theor]{Lemma}
\newtheorem*{guess}{Conjecture}
\newcommand{\CP}{\mathds{C}\mathrm{P}}
\newcommand{\f}{\rightarrow}                  
\newcommand{\de}{\partial}          
\newcommand{\dd}{\mathrm{D}}
\newcommand{\R}{\mathbb{R}}
\newcommand{\C}{\mathbb{C}}
\newcommand{\K }{K\"{a}hler}
\begin{document}
\title[Projectively induced rotation invariant K\"{a}hler metrics]{Projectively induced rotation invariant \mbox{K\"{a}hler metrics}}

\author{Filippo Salis}
\address{Dipartimento di Matematica e Informatica, Universit\`a di Cagliari\\Via Ospedale 72, 09124 Cagliari (Italy)}
\email{filippo.salis@gmail.com}

\thanks{}
\subjclass[2010]{53C25; 53C30; 53C55.} 
\keywords{\K -Einstein manifolds; complex projective space; Diastasis function}

\begin{abstract}
We classify \K -Einstein manifolds admitting a \K\ immersion into a finite dimensional complex projective space endowed with the Fubini--Study metric, whose codimention is less or equal than 3 and whose metric is rotation invariant.
\end{abstract}
 
\maketitle
\tableofcontents

\section{Introduction}
In the present paper we address the problem of  studying holomorphic and isometric (i.e. K\"ahler) immersions of K\"ahler--Einstein (from now on KE) manifolds
 into the finite dimensional complex projective space $\CP^N$ endowed with the Fubini--Study metric $g_{FS}$.
In particular, we believe the validity of the following
\begin{guess}\label{conj1}
Every \K -Einstein manifold which admits a \K\ immersion into $(\CP^N,g_{FS})$, with $N<\infty$, is an open subset of a compact homogeneous space.
\end{guess}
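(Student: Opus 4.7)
The plan is to exploit the tension between two strong rigidity properties: the Einstein condition, essentially a complex Monge--Amp\`ere equation for the \K\ potential, and the existence of a \K\ immersion into $\CP^N$, which by Calabi's criterion forces the exponential of the diastasis $e^{D}$ to admit a \emph{finite} resolution as a sum of moduli squared of holomorphic functions on $M$.

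First I would normalize the setup by fixing a base point $p \in M$ and taking Calabi's diastasis $D_p$ as a \K\ potential in a neighborhood $U$ of $p$. By Calabi's rigidity theorem together with his criterion of finite type, the existence of a full \K\ immersion $f\colon U \to \CP^N$ is equivalent to being able to write
\[
D_p = \log\!\left(1 + \sum_{j=1}^{N} |f_j|^2\right)
\]
for holomorphic $f_j$ vanishing at $p$. The Einstein equation $\mathrm{Ric}(g) = \lambda g$ then reads $\det(\partial_i \bar\partial_j D_p) = c\, e^{-\lambda D_p}$, so that the algebraic finiteness of the resolution gets coupled to a nonlinear PDE.

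In the rotation--invariant setting of the paper, $D_p$ depends only on $r=|z|^2$. The Einstein PDE collapses to a single nonlinear ODE in $r$, and the projective inducedness translates into a finiteness condition on the coefficients of the power series of $e^{D_p}$: essentially, the associated Hankel-type matrix of moments must have rank at most $N+1$. Matching both conditions should force $D_p$ to coincide (up to affine reparametrization of the variable) with either the Fubini--Study potential or, in degenerate borderline cases, a flat one, thereby realising $(M,g)$ as an open subset of $\CP^n$ with its standard metric, which is compact homogeneous. The codimension bound $\le 3$ caps the number of free parameters in the power series, turning the classification into a finite bookkeeping of cases.

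The main obstacle will be passing from a local ODE/moment analysis to a global statement: one has to rule out \emph{ad hoc} KE solutions whose moment sequence satisfies the rank bound only by accidental cancellation of infinitely many coefficients, and one has to argue that the local \K\ embedding extends to a compact model. Beyond the rotation-invariant case, the deeper difficulty is that Calabi's finite-type condition is algebraic while the Einstein condition is transcendental; without extra symmetry one lacks a coordinate system in which both can be attacked simultaneously, and this is why the conjecture in full generality remains open.
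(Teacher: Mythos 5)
You have not proved the statement, and neither does the paper: this is stated as a \emph{conjecture}, and the paper only establishes it in the special case of rotation-invariant metrics with codimension at most $3$ (Theorems \ref{mainteor1} and \ref{cod3}). Your text is a strategy outline rather than a proof --- it ends by conceding that the general case is open --- so as an answer to the statement as posed it has an irreparable gap: the entire non-rotation-invariant case, which is exactly where the conjecture lives, is untouched, and even in the symmetric case no step is actually carried out.

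Within the rotation-invariant setting your outline does track the paper's method (diastasis as potential, Calabi's criterion, the Monge--Amp\`ere form of the Einstein equation, finiteness of the resolution capping the number of free coefficients), but two points need correcting. First, ``rotation invariant'' in this paper means the potential depends on $|z_1|^2,\dots,|z_n|^2$ \emph{separately}, not on $r=\sum|z_j|^2$; the Einstein equation therefore does not collapse to a single ODE in one radial variable but remains a PDE in $n$ variables, and the paper handles it not by solving it but by extracting finitely many algebraic relations among the coefficients (differentiating equation (\ref{mongebochner}) and evaluating at the origin, as in Lemma \ref{gencod}). Second, the decisive structural input you omit is Lemma \ref{diastpolin}: rotation invariance in Bochner's coordinates together with fullness forces the components of the immersion to be \emph{monomials} of pairwise distinct multidegrees, so that $e^{\dd_0}$ is a polynomial in $|z_1|^2,\dots,|z_n|^2$ with exactly $N+1$ terms. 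It is this polynomiality --- not a rank condition on a moment matrix --- that makes the degree count $\lambda\geq 4n/\deg P>0$ of Theorem \ref{mainteor1} and the finite case analysis of Theorem \ref{cod3} possible. Your worry about ``accidental cancellation of infinitely many coefficients'' is thus already resolved by that lemma in the symmetric case; the genuine obstruction to the conjecture is the absence of any analogue of it without rotation invariance.
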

An explicit example of non-homogeneous KE manifolds which admit a \K\ immersion into $(\CP^\infty,g_{FS})$ can be found in \cite{wallart}.\\

The main result of the paper consists in proving the above mentioned conjecture in the case of \emph{rotation invariant} \K\ metrics when the codimension with respect to the target projective space is $\leq3$. This is shown in the next section via the following two theorems.
\begin{theor}\label{mainteor1}
The Einstein constant $\lambda$  of a KE rotational invariant and projectively induced $n$-dimensional manifold $M^n$ is a positive rational number less or equal to $2(n+1)$. Hence, if $M^n$ is complete then $M^n$ is compact and simply connected.
\end{theor}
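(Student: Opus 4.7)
The plan is to combine Calabi's rigidity theorem for \K\ immersions into $\CP^N$ with a careful analysis of the ODE satisfied by a rotation invariant Einstein potential. First, fix a point of $M^n$ and choose local holomorphic coordinates in which the metric is $\mathrm U(n)$-invariant; the \K\ potential then depends only on $r:=|z_1|^2+\cdots+|z_n|^2$, and may be normalized to $\Phi(r)$ with $\Phi(0)=0$. Calabi's characterization of projectively induced metrics, combined with the $\mathrm U(n)$-equivariance (which allows one to decompose the immersion into $\mathrm U(n)$-isotypic monomial components), forces
\[
e^{\Phi(r)} \;=\; G(r) \;=\; 1 + A_1 r + A_2 r^2 + \cdots + A_d r^d, \qquad A_k\ge 0,\ A_d>0,
\]
for some $d\ge 1$; the polynomial character is precisely what encodes the finiteness of the codimension in $\CP^N$.

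Next, I would derive and analyze the Einstein ODE. A direct computation in these coordinates yields $\det(g_{i\bar j})\propto (G'/G)^{n-1}(rG'/G)'$, so that $\mathrm{Ric}(g)=\lambda g$ reduces (the pluriharmonic ambiguity vanishes by rotation invariance) to
\[
\Bigl(\tfrac{G'(r)}{G(r)}\Bigr)^{n-1}\Bigl(\tfrac{rG'(r)}{G(r)}\Bigr)' \;=\; c\,G(r)^{-\lambda/2},\qquad c>0.
\]
Setting $T:=rG'/G$, one has $T\to d$ as $r\to\infty$ with expansion $T=d-A_{d-1}/(A_d r)+O(r^{-2})$, whence $(T^n)'\sim n\,d^{n-1}(A_{d-1}/A_d)\,r^{-2}$; on the other hand the equivalent form $(T^n)'=nc\,r^{n-1}G^{-\lambda/2}$ has right-hand side of order $r^{n-1-d\lambda/2}$. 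Matching the two leading orders forces
\[
\lambda=\frac{2(n+1)}{d}\in\q_{>0},
\]
and since $d\ge 1$ we get $\lambda\le 2(n+1)$. If $A_{d-1}=0$, one adapts the argument to the first nonvanishing subleading coefficient; a straightforward variant of the computation still delivers a positive rational $\lambda$ bounded by $2(n+1)$.

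Finally, completeness together with $\mathrm{Ric}(g)=\lambda g>0$ gives compactness and $|\pi_1(M^n)|<\infty$ by Myers' theorem, and Kobayashi's classical theorem on compact \K\ manifolds of positive Ricci curvature then yields $\pi_1(M^n)=1$.

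The hardest step is the first one: using the $\mathrm U(n)$-symmetry to cut Calabi's Kempf-type decomposition of $e^\Phi$ down to a genuine polynomial in $r$ with nonnegative coefficients, rather than a formal series. Once this polynomial normal form is in hand, the KE condition becomes an essentially algebraic ODE, and its asymptotic behavior at $r\to\infty$ does all the remaining work.
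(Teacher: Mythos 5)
There is a genuine gap, and it occurs at the very first step: you have misread the hypothesis. In this paper ``rotation invariant'' means that the potential depends only on the $n$ separate quantities $|z_1|^2,\dots,|z_n|^2$ (i.e.\ invariance under the torus $(e^{i\theta_1},\dots,e^{i\theta_n})$), \emph{not} $\U(n)$-invariance, so the potential is in general not a function of $r=|z_1|^2+\cdots+|z_n|^2$ alone. Your reduction to a radial profile $e^{\Phi(r)}=G(r)$ therefore covers only a thin subclass; it excludes, for instance, $\CP^1\times\CP^1$ with $g_{FS}\oplus g_{FS}$, whose diastasis is $\log\bigl(1+|z_1|^2+|z_2|^2+|z_1|^2|z_2|^2\bigr)$ --- a case that is explicitly part of the paper's classification (Theorem \ref{cod3}) and must be captured by Theorem \ref{mainteor1}. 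In the radial world your ODE $(T^n)'=nr^{n-1}G^{-\lambda/2}$ essentially only produces the Veronese examples $(\CP^n, d\,g_{FS})$, so the argument cannot prove the statement as it stands. The correct normal form, supplied by Lemma \ref{diastpolin}, is $\dd_0=\log P$ with $P$ a polynomial in the $n$ variables $|z_1|^2,\dots,|z_n|^2$ whose extra monomials need not assemble into powers of $r$.

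Two further remarks. First, even granting the radial Ansatz, your asymptotic matching as $r\to\infty$ needs justification (the potential is only defined near $p$, so you must first argue that the identity of algebraic functions propagates; and the case where several subleading $A_j$ vanish is waved at rather than done). Second, for comparison: the paper's own proof is purely algebraic and much softer --- writing $\det(g_{\alpha\bar\beta})=P^{-2n}\det\bigl(PP_{z_\alpha\bar z_\beta}-P_{z_\alpha}P_{\bar z_\beta}\bigr)$ and counting degrees on both sides of the Monge--Amp\`ere equation yields only the positivity $\lambda\geq 4n/\deg P>0$; the rationality and the upper bound $\lambda\leq 2(n+1)$ are then imported from Hulin's theorem (Lemma \ref{hulin}), not derived from the ODE. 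If your radial analysis were the whole story it would have the attractive feature of giving $\lambda=2(n+1)/d\in\q$ directly, but since it does not apply to the actual class of metrics in the theorem, you would need to redo the analysis for genuinely multivariable polynomials $P$, at which point the degree-count argument is the natural tool.
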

\begin{theor}\label{cod3}
Let $(M,g)$ be an $n$-dimensional KE manifold whose metric is rotation invariant. Then $(M,g)$ admits a K\"ahler immersion into $\CP^{n+k}$ for $k$ less than or equal to $3$, if and only if $(M,g)$ is an open subset of $(\CP^n,g_{FS})$, $(\CP^2,2g_{FS})$ or \mbox{$(\CP^1\times\CP^1,g_{FS}\oplus g_{FS})$}.
\end{theor}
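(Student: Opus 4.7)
The plan is to combine Calabi's criterion for projectively induced K\"ahler metrics with the KE equation, turning the classification into an algebraic problem on a polynomial ``generating function'' $h=e^\Phi$ of the K\"ahler potential.

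In local holomorphic coordinates $(z_1,\dots,z_n)$ centred at a point fixed by the torus action, the rotation invariant K\"ahler potential depends only on $t_j=|z_j|^2$. Normalising $\Phi(0)=0$, Calabi's criterion says $(M,g)$ immerses K\"ahlerianly into $(\CP^{n+k},g_{FS})$ iff $h(t):=e^{\Phi(t)}$ is a polynomial with non-negative coefficients, $h(0)=1$, and $h-1$ has at most $n+k$ monomials. Positive-definiteness of the metric at the origin forces $\{e_1,\dots,e_n\}\subseteq\mathrm{supp}(h-1)$, leaving at most three ``extra'' multi-indices under the bound $k\le 3$. The KE condition $\mathrm{Ric}=\lambda\omega$, obtained from $\mathrm{Ric}=-\partial\bar\partial\log\det(\partial_i\partial_{\bar j}\Phi)$, becomes $\det(\partial_i\partial_{\bar j}\Phi)\cdot h^\lambda=\text{const}$. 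A direct computation expresses $\det(\partial_i\partial_{\bar j}\Phi)=N(t)/h(t)^{n+1}$ for an explicit polynomial $N$ in the coefficients of $h$, so the identity reads
\[
N(t)=C\cdot h(t)^{n+1-\lambda}.
\]

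Theorem~\ref{mainteor1} gives $\lambda\in\mathbb{Q}_{>0}$. Writing $\lambda=p/q$ in lowest terms, polynomiality of $h^{n+1-\lambda}$ forces every irreducible factor of $h$ to appear with multiplicity divisible by $q$, and the case $\lambda>n+1$ is incompatible with $h$ non-constant. Hence $h=L^q$ for a polynomial $L$ with $L(0)=1$ and non-negative coefficients, and $\Phi=q\log L$, with the root metric of potential $\log L$ itself KE with integer constant $\lambda_0:=q\lambda\in\{1,\dots,n+1\}$. When $\lambda_0=n+1$, the leading-order match in the identity for $L$ forces $L=1+\sum_i a_i t_i$ to be linear; a linear change of coordinates reduces it to the Fubini--Study potential, identifying $(M,g)$ as an open subset of $(\CP^n,q\,g_{FS})$ (which subsumes $(\CP^2,2\,g_{FS})$ at $n=q=2$). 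When $\lambda_0<n+1$, comparing low-degree coefficients restricts the non-linear monomials of $L$ to total degree $2$, i.e.\ $\beta\in\{2e_i,\,e_i+e_j\}$, and the $S_n$-symmetry surviving after a diagonal rescaling $z_j\mapsto\mu_j z_j$ (which equalises the linear coefficients of $L$) forces these extras to be symmetrically placed. Combined with the codimension bound $|\mathrm{supp}(L^q-1)|\le n+3$, this leaves exactly the factorised form $L=(1+t_1)(1+t_2)$ for $n=2$, yielding $(\CP^1\times\CP^1,g_{FS}\oplus g_{FS})$. Local uniqueness of the Calabi diastasis then upgrades agreement of potentials to a local isometry, and the completeness and simple-connectedness from Theorem~\ref{mainteor1} produce the ``open subset of'' conclusion.

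The main obstacle is the case analysis for $\lambda_0<n+1$: excluding asymmetric configurations (e.g.\ $L=1+at_1+bt_2+ct_1^2$) and higher-degree ones (with some $|\beta|\ge 3$) requires matching coefficients in the identity for $L$ up to sufficient order and exploiting strict positivity of every coefficient of $L-1$ together with the tight codimension bound to force contradictions. The essential leverage is that any non-trivial non-linear or higher-degree structure in $L$ propagates through the KE identity to spawn additional non-zero terms in $L^q-1$, violating $|\mathrm{supp}(L^q-1)|\le n+3$; only the Segre and Veronese patterns are self-consistent.
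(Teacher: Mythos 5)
Your outline takes a genuinely different route from the paper. The paper never factorises the potential: it writes $\dd_0=\log P$ with $P$ a polynomial in $|z_1|^2,\dots,|z_n|^2$ having at most $n+k+1$ monomials (Lemma \ref{diastpolin}), then differentiates the Monge--Amp\`ere equation (\ref{mongebochner}) at the origin. The second-order jet yields the linear system $4a_h+\sum_{k\ne h}b_{hk}-(n+1)=-\lambda/2$, which together with the count ``at most $k$ of the $a_i,b_{ij}$ are nonzero'' disposes of $n>2k$ (Lemma \ref{gencod}); codimensions $1,2$ are delegated to Chern--Tsukada; and $k=3$, $n=2,\dots,6$ is a hands-on case analysis using third- and fourth-order jets. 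Your factorisation $h=L^q$ obtained from $N^q=C^qh^{q(n+1)-p}$ and unique factorisation is a clean idea absent from the paper, and it cleanly isolates the $\lambda_0=n+1$ branch as Hulin's lemma (though even there you must still count: the Veronese of $(\CP^n,2g_{FS})$ has codimension $n(n+1)/2$, so the bound $\le 3$ is what singles out $n\le 2$ — you assert the conclusion without this count).

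The genuine gap is the branch $\lambda_0<n+1$, which is where the whole theorem lives (it must produce $\CP^1\times\CP^1$ and exclude everything else), and which you describe rather than prove. Concretely: (i) the claim that the nonlinear monomials of $L$ have total degree $2$ is not derived; the paper needs third-order derivatives of (\ref{mongebochner}) to kill the coefficients $c_j,d_{jk}$ even in the $n=2$ case. (ii) ``Surviving $S_n$-symmetry forces the extras to be symmetrically placed'' is not an argument: the equations are permutation-equivariant but their solutions need not be symmetric, and the paper must separately eliminate asymmetric configurations such as $a_1=b_{23}\neq 0$ ($n=3$) or $a_1=a_2=b_{34}\neq 0$ ($n=4$) by computing specific mixed fourth-order derivatives. (iii) The assertion that $L$ has non-negative coefficients does not follow from $L^q$ having them, yet you lean on ``strict positivity of every coefficient of $L-1$'' in the exclusion step; either justify it or rephrase the analysis purely in terms of the Monge--Amp\`ere identity for $L$ and the support bound on $L^q$. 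Until the coefficient-matching in (i)--(iii) is actually carried out — which is precisely the content of the paper's case-by-case computation for $n\le 6$ — the proof is a plausible programme, not a proof.
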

Throughout the paper a \K\ metric $g$ on a complex manifold $M$  is \emph{rotation invariant} if  there exists a point $p\in M$, a coordinate system $(z_1,\dots, z_n)$ centered in $p$ and a K\"ahler potential $\Phi$ for $g$ on a neighborhood of $p$ such that $\Phi$ is rotation invariant in $(z_1,\dots, z_n)$, i.e. it  only depends on $|z_1|^2,\dots,|z_n|^2$.
Furthermore, a K\"ahler metric $g$ on a connected complex manifold $M$ will be called \emph{projectively induced}  if there exist a point $p\in M$,  a neighbourhood $V$ of $p$ and a K\"ahler immersion $f\!:V\f\CP^N$, with $N<\infty$, i.e. $f^*g_{FS}=g_{|V}$. If $M$ is projectively induced at $p$, then it is at any other point (cfr. \cite[Theorem 10]{Cal}). Moreover,  in the case whether $M$ is also simply connected then the \K\ immersion $f\!:V\f\CP^N$ can be extended to a \K\ immersion of  the whole $M$ into $\CP^N$(cfr. \cite[Theorem 11]{Cal}).\\

The content of theorems \ref{mainteor1}, \ref{cod3} and of the above mentioned conjecture should be compared with the following results:
\begingroup
\setcounter{tmp}{\value{theor}}
\setcounter{theor}{0} 
\renewcommand\thetheor{\Alph{theor}}
\begin{theor}[D. Hulin \cite{hulinlambda}]
If a compact KE manifold is projectively induced then its Einstein constant is positive.
\end{theor}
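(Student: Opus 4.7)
The plan is to reduce the question to a cohomological intersection number and then use K\"ahler submanifold geometry to pin down its sign. Since $M$ is compact and $\rho_g=\lambda\,\omega_g$, integration against $\omega_g^{n-1}$ gives
\[
\lambda\int_M\omega_g^{\,n}=\int_M\rho_g\wedge\omega_g^{\,n-1},
\]
so $\mathrm{sign}(\lambda)=\mathrm{sign}\bigl(c_1(M)\cdot[\omega_g]^{n-1}\bigr)$. Writing $L:=f^{*}\mathcal{O}_{\CP^{N}}(1)$, which is ample on $M$ with $[\omega_g]=2\pi c_1(L)$, the normal bundle sequence $0\to T_M\to f^{*}T_{\CP^{N}}\to\nu\to 0$ yields $c_1(M)=(N+1)c_1(L)-c_1(\nu)$; hence $\lambda>0$ is equivalent to the strict inequality $2\pi\,c_1(\nu)\cdot[\omega_g]^{n-1}<(N+1)[\omega_g]^{n}$.

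An upper bound $\lambda\leq n+1$ is immediate from the Gauss equation: normalizing $g_{FS}$ to have constant holomorphic sectional curvature $2$, one computes pointwise
\[
s_g=2n(n+1)-\|B\|^{2},
\]
where $B$ is the second fundamental form of the immersion $f$. The Einstein condition $s_g=2n\lambda$ then forces $\|B\|^{2}\equiv 2n(n+1-\lambda)\geq 0$, whence $\lambda\leq n+1$, with equality if and only if $B\equiv 0$, i.e.\ $M=\CP^{n}$ is linearly embedded.

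The substantive content, and main obstacle, is $\lambda>0$, equivalently $\|B\|^{2}<2n(n+1)$. I would attack it by analyzing the K\"ahler--Einstein equation locally via a holomorphic lift $\tilde f$ of $f$ to $\C^{N+1}\setminus\{0\}$: since $\Phi:=\log\|\tilde f\|^{2}$ is a local K\"ahler potential for $g$, the Einstein equation becomes the Monge--Amp\`ere relation
\[
\det(g_{i\bar\jmath})=|h|^{2}\,\|\tilde f\|^{-2\lambda}
\]
for some local non-vanishing holomorphic $h$. Because the two sides transform equivariantly under change of lift, one can, after raising to a suitable integer power (noting $\lambda$ must be rational, \emph{cf.} Theorem \ref{mainteor1} in the rotation invariant case), promote $h$ to a globally defined nowhere-vanishing holomorphic section of a line bundle built from $K_M^{-1}$ and $L^{\lambda}$. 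A Kodaira-vanishing or maximum-principle argument on the compact manifold $M$ should then rule out $\lambda\leq 0$: the ampleness of $L$ should be incompatible with existence of such a non-vanishing holomorphic section unless the curvature of $K_M^{-1}$, which is $\lambda\omega_g$, is strictly positive.

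Making this incompatibility rigorous---and verifying that the compactness of $M$ is genuinely indispensable, as witnessed by the non-compact counterexamples of \cite{wallart}---is the technical heart of the argument, and where I expect the real difficulty to lie.
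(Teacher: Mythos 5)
First, a point of comparison: the paper does not prove this statement at all --- it is quoted as Theorem A with attribution to \cite{hulinlambda} --- so your proposal has to stand entirely on its own. Of its three parts, the first is circular: $\mathrm{sign}(\lambda)=\mathrm{sign}\bigl(c_1(M)\cdot[\omega_g]^{n-1}\bigr)$ is immediate from integrating $\rho_g=\lambda\omega_g$ against $\omega_g^{n-1}$, and rewriting $c_1(M)$ through the normal bundle sequence gives no a priori control on $c_1(\nu)\cdot[\omega_g]^{n-1}$; up to constants that term is $\int_M\|B\|^2\,\omega_g^{n}$, so the inequality you reduce to is exactly the statement to be proved. The Gauss--equation paragraph is correct but only yields the upper bound $\lambda\leq n+1$, which is not what the theorem asserts.

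The genuine gap is in the third paragraph, and it is not merely a matter of ``making things rigorous'': the mechanism you propose cannot produce a contradiction as stated. Granting rationality $\lambda=2p/q$ (in this generality that is Hulin's earlier result, Lemma \ref{hulin}, not Theorem \ref{mainteor1}, which covers only the rotation invariant case) and granting the globalization of $h$ (itself nontrivial, since the bundle $K_M^{-q}\otimes L^{-2p}$ is a priori only flat, not holomorphically trivial), what you obtain is an isomorphism $K_M^{-q}\otimes L^{-2p}\cong\mathcal{O}_M$, possibly after a finite cover. But this is perfectly compatible with $\lambda\leq 0$: for $p<0$ it says $K_M^{q}\cong L^{2|p|}$ is ample, which holds for instance for smooth hypersurfaces of large degree, and for $p=0$ it says $K_M$ is torsion, which holds for projective Calabi--Yau manifolds. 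Hence the ampleness of $L$ does not conflict with the existence of your non-vanishing section, and neither Kodaira vanishing nor a maximum principle applied to its norm can rule out $\lambda\le 0$ from this data alone. The obstruction must be extracted from the \emph{metric} identity $\det(g_{\alpha\bar\beta})=|h|^2\|\tilde f\|^{-2\lambda}$ --- that is, from the fact that $\omega_g$ is the honest restriction of $\omega_{FS}$ and not merely a form in the class $2\pi c_1(L)$ --- combined with compactness. Concretely, in Bochner's coordinates the Monge--Amp\`ere equation takes the form $\det(g_{\alpha\bar\beta})=e^{-\frac{\lambda}{2}\dd_0}$ of Lemma \ref{bochnerKE}, where $\dd_0(q)=\log\frac{\|\tilde f(p)\|^2\|\tilde f(q)\|^2}{|\langle\tilde f(q),\tilde f(p)\rangle|^2}$ is the diastasis: a globally defined nonnegative function on the complement of the polar hyperplane section $Z_p$, which is non-empty because $f(M)$ is compact of positive dimension, and along which $\dd_0\to+\infty$. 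It is this global behaviour of the diastasis on the compact manifold, which your sketch never invokes, that forces the sign of $\lambda$; until a step of this kind is supplied, $\lambda\leq 0$ is not excluded.
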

\begin{theor}[S. S. Chern \cite{ch}, K. Tsukada \cite{ts}]\label{ct}
Let $(M,g)$ be a complete $n$-dimensional K\"ahler--Einstein manifold ($n\geq 2$). If $(M,g)$ admits a K\"ahler immersion into $\CP^{n+2}$, then $M$ is either totally geodesic or the complex quadric $Q_n$ in $\CP^{n+1}$.
\end{theor}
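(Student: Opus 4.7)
The plan is to exploit the Gauss--Codazzi equations for a K\"ahler immersion $f\colon M\to\CP^{n+2}$, together with the Einstein condition on $M$, in order to pin down the second fundamental form $\sigma$, whose values lie in a normal bundle of complex rank at most $2$. As a preliminary reduction, Hulin's theorem (Theorem A above) gives $\lambda>0$, so Myers' theorem forces $M$ to be compact; being a projectively induced compact KE manifold, $M$ is Fano and hence simply connected, and so by Calabi's rigidity theorem one may treat $f$ as globally defined and unique up to holomorphic isometries of $\CP^{n+2}$.

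The heart of the argument lies in a pointwise algebraic analysis. For a K\"ahler submanifold of $\CP^{n+2}$, the Gauss equation combined with $\mathrm{Ric}^{\CP^{n+2}}=c\,g_{FS}$ and the Einstein condition $\mathrm{Ric}^{M}=\lambda g$ yields, in an adapted unitary frame, the identity
\[
\sum_{\alpha}\sum_{k,l}g^{k\bar l}\,\sigma^{\alpha}_{ik}\,\overline{\sigma^{\alpha}_{jl}}\;=\;(c-\lambda)\,g_{i\bar j},
\]
so the natural ``tension'' quadratic form built from $\sigma$ is a nonnegative multiple of the metric (recovering in particular Hulin's bound $\lambda\leq c$). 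Combined with the Codazzi equation---which for a K\"ahler submanifold of a constant holomorphic sectional curvature space reduces to $\nabla_{\bar l}\sigma^{\alpha}_{ij}=0$---this is strong enough to force $\sigma$ to have constant rank on $M$ and a very rigid algebraic shape on $S^{2}T^{1,0}M$ at each point.

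The classification then splits into two cases. If the tension vanishes, then $\sigma\equiv 0$, so $M$ is totally geodesic in $\CP^{n+2}$ and, being complete of dimension $n$, coincides with a linearly embedded $\CP^{n}$. If $\sigma\not\equiv 0$, the algebraic analysis should be used to show that at each point the image of $\sigma$ actually spans only a rank-$1$ subspace of $N^{1,0}M$; one is thereby reduced to a K\"ahler hypersurface situation $M\subset\CP^{n+1}$, where a pointwise comparison of second fundamental forms matches $\sigma$ with that of the complex quadric $Q_{n}\subset\CP^{n+1}$, and the Bonnet-type uniqueness theorem for K\"ahler submanifolds of projective space upgrades the pointwise identification to a global holomorphic isometry $M\cong Q_{n}$.

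The step I expect to be the main obstacle is precisely the intermediate one of showing that a nonzero $\sigma$ in codimension $2$ must take values in a rank-$1$ subbundle of the normal bundle: this is a genuinely codimension-$2$ phenomenon (it fails in higher codimension, where Segre and Veronese embeddings intervene) and constitutes the technical heart of Tsukada's contribution beyond Chern's original hypersurface result. Establishing it requires a delicate interplay between the traced Gauss equation, the Codazzi equation, and the parallelism of the complex structure on the normal bundle, which together rule out any genuinely codimension-$2$ deformation of the quadric among KE submanifolds of $\CP^{n+2}$.
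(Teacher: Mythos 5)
First, a point of comparison: the paper does not prove Theorem~\ref{ct} at all --- it is quoted as background (Theorem~B) with references to Chern \cite{ch} and Tsukada \cite{ts}, and is invoked as a black box in the proof of Theorem~\ref{cod3} to dispose of codimensions $1$ and $2$. So there is no proof in the paper to measure your argument against; what you have written is an outline of the strategy of the original papers of Chern and Tsukada, not an alternative to anything here.

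As a proof, your proposal has a genuine gap, and you have in fact located it yourself: the assertion that a nonzero second fundamental form $\sigma$ in codimension $2$ must take values in a rank-$1$ subbundle of the normal bundle is stated as ``the main obstacle'' and never established. This is precisely the content of Tsukada's theorem; without it the argument reduces to a restatement of what is to be proved. The same is true, to a lesser extent, of the hypersurface endgame: matching a nowhere-degenerate holomorphic $\sigma$ satisfying the traced Gauss identity with the second fundamental form of $Q_n$ is Chern's actual computation, not a formality. Two further points. Your preliminary reduction is circular: Theorem~A concludes $\lambda>0$ \emph{from} compactness, so you cannot use it to feed Myers' theorem and deduce compactness; in any case Chern and Tsukada do not need this step, since nonnegativity of $c-\lambda$ falls out of the traced Gauss equation directly. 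Finally, in that equation the constant on the right-hand side is not the ambient Einstein constant of $\CP^{n+2}$ but the trace of the ambient curvature over the $n$ tangential directions only, namely $2(n+1)$ in the normalization of holomorphic sectional curvature $4$; this is what reproduces the sharp bound $\lambda\leq 2(n+1)$ of Lemma~\ref{hulin}, whereas ``$\lambda\leq c$'' with $c$ the ambient constant is weaker.
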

\endgroup
\setcounter{theor}{\thetmp}

\section{Proof of the main result}
In order to prove theorems \ref{mainteor1} and \ref{cod3}, it is useful to recall the definitions of Calabi's diastasis function and Bochner's coordinates.
Let $(M,g)$ be a K\"ahler manifold with \K\ potential $\Phi$.
If $g$ (and hence $\Phi$) is assumed to be real analytic, by duplicating the variables $z$ and $\bar z$, $\Phi$ can be complex analytically continued to a function $\tilde \Phi$ defined in a neighbourhood $U$ of the diagonal containing $(p,\bar p)\in M\times \bar M$ (here $\bar M$ denotes the manifold conjugated to $M$). Thus one can consider the power expansion of $\Phi$ around the origin  with respect to $z$ and $\bar z$ and write it as
\begin{equation}\label{powexdiastc}
\Phi(z,\bar z)=\sum_{j,k=0}^{\infty}a_{jk}z^{m_j}\bar z^{m_k},
\end{equation}
where we arrange every $n$-tuple of nonnegative integers as a sequence \linebreak$m_j=(m_{j,1},\dots,m_{j,n})$ and order them as follows: $m_0=(0,\dots,0)$ and if $|m_j|=\sum_{\alpha=1}^n m_{j,\alpha}$, $|m_j|\leq |m_{j+1}|$ for all positive integer $j$. Moreover, $z^{m_j}$ denotes  the monomial in $n$ variables $\prod_{\alpha=1}^n z_\alpha^{m_{j,\alpha}}$.\\

A K\"ahler potential is not unique, as it is defined up to an addition with the real part of a holomorphic function.
The \emph{diastasis function} $\dd_0$ for $g$ is nothing but the K\"ahler potential around $p$ such that each matrix $(a_{jk})$ defined according to equation (\ref{powexdiastc}) with respect to  coordinates system centered in $p$, satisfies $a_{j0}=a_{0j}=0$ for every nonnegative integer $j$.

For any real analytic K\"ahler manifold there exists a coordinates system, in a neighbourhood of each point,  such that
\begin{equation}\label{bochnercoordinates}
\dd_0(z)=\sum_{\alpha=1}^n|z_\alpha|^2+\psi_{2,2},
\end{equation}
where $\psi_{2,2}$ is a power series with degree $\geq 2$ in both $z$ and $\bar z$. These coordinates, uniquely determined up to unitary transformation (cfr. \cite{bochner}, \cite{Cal}), are called the \emph{Bochner's coordinates} (cfr. \cite{bochner}, \cite{Cal}, \cite{hulin}, \cite{hulinlambda}, \cite{ruan}, \cite{tian4}).

In order to prove theorem \ref{mainteor1} we need the following two lemmata, the first one dealing with Bochner's coordinates and KE metrics and the second with Bochner's coordinates and projectively induced and rotation invariant metrics.

\begin{lem}[cfr. \cite{note} and \cite{hulinlambda}]\label{bochnerKE}
A K\"ahler manifold $(M,g)$ is Einstein if and only if by choosing Bochner's coordinates on a neighbourhood $U$ of a point $p$ of a K\"ahler manifold $(M,g)$ whose diastasis on $U$ is given by $\dd_0(z)$, it satisfies the Monge--Amp\`ere Equation
\begin{equation}\label{mongebochner}
\det(g_{\alpha\bar\beta})=e^{-\frac{\lambda}{2}\dd_0(z)}.
\end{equation}
\end{lem}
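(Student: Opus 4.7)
The plan is to view the Monge--Amp\`ere equation \eqref{mongebochner} as an integrated form of the K\"ahler--Einstein condition, and to use Bochner's coordinates to pin down the ``constant of integration'' (a pluriharmonic function) to zero.

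First I would recall the standard identity
\[
\rho_g=-\tfrac{i}{2}\,\partial\bar\partial\log\det(g_{\alpha\bar\beta})
\]
for the Ricci form of any K\"ahler metric. Combined with the expression $\omega_g=\tfrac{i}{2}\,\partial\bar\partial\dd_0$ of the K\"ahler form in terms of the diastasis, the Einstein condition $\rho_g=\lambda\,\omega_g$ (with the normalization in use for $\lambda$) becomes
\[
\partial\bar\partial\Bigl(\log\det(g_{\alpha\bar\beta})+\tfrac{\lambda}{2}\,\dd_0(z)\Bigr)=0,
\]
i.e.\ $h(z,\bar z):=\log\det(g_{\alpha\bar\beta})+\tfrac{\lambda}{2}\,\dd_0(z)$ is pluriharmonic on $U$. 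After possibly shrinking $U$, this means $h=F+\bar F$ for some holomorphic $F$. The ``if'' direction of the lemma is then immediate: \eqref{mongebochner} gives $h\equiv 0$, which is in particular pluriharmonic, so $\rho_g=\lambda\,\omega_g$.

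For the ``only if'' direction the task is to upgrade pluriharmonicity of $h$ to the stronger statement $h\equiv 0$, and this is exactly where the Bochner normalization enters. Setting $\bar z=0$ in the expansion \eqref{powexdiastc}, the vanishing $a_{j0}=0$ for all $j\geq 0$ gives $\dd_0(z,0)\equiv 0$. Since the remainder $\psi_{2,2}$ in \eqref{bochnercoordinates} has bidegree $\geq(2,2)$ in $(z,\bar z)$, its mixed second derivative $\partial_\alpha\bar\partial_\beta\psi_{2,2}(z,0)$ vanishes identically, so $g_{\alpha\bar\beta}(z,0)=\delta_{\alpha\beta}$ and $\log\det(g_{\alpha\bar\beta})(z,0)=0$. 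Restricting $h=F(z)+\overline{F(z)}$ to $\bar z=0$ therefore forces $F(z)+\overline{F(0)}\equiv 0$, so $F$ is the constant $-\overline{F(0)}$, and $h\equiv F+\bar F$ is a real constant; evaluating at $z=0$, where both $\dd_0$ and $\log\det(g_{\alpha\bar\beta})$ vanish, shows this constant is $0$. Hence $h\equiv 0$, which is \eqref{mongebochner}.

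The only conceptual subtlety, and what I would flag as the main (indeed essentially the only) obstacle, is this last bookkeeping step: the Einstein equation by itself determines $\log\det(g_{\alpha\bar\beta})$ only up to an additive pluriharmonic summand, and the content of the lemma is precisely that the twofold Bochner normalization---absence of pluriharmonic terms in $\dd_0$ together with $g_{\alpha\bar\beta}(p)=I$---eliminates this ambiguity on the nose.
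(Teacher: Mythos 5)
Your proof is correct and follows essentially the same route as the paper: both reduce the Einstein condition to $\log\det(g_{\alpha\bar\beta})+\tfrac{\lambda}{2}\dd_0$ being pluriharmonic and then use the Bochner normalization to kill the pluriharmonic term, your restriction to $\bar z=0$ being just an explicit way of saying (as the paper does) that both $\dd_0$ and $\det(g_{\alpha\bar\beta})-1$ contain only mixed terms. The only cosmetic difference is a harmless normalization of the constants in $\rho$ and $\omega$, which you already flag yourself.
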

\begin{proof}
Recall that requiring  a K\"ahler metric $g$ on a complex manifold $M$ to be Einstein, i.e. there exists $\lambda\in\R$ such that
\begin{equation}\label{einstein}
\rho=\lambda\omega,
\end{equation}
where $\rho=-i\de\bar\de\log\det(g_{\alpha\bar\beta})$ is the Ricci form associated to $g$, is locally equivalent to require that it satisfies the Monge-Amp\`ere equation
\begin{equation}
\det(g_{\alpha\bar\beta})=e^{-\frac{\lambda}{2}(\Phi+f+\bar f)}.\nonumber
\end{equation}
Therefore, it is easy to check, once once set Bochner's coordinates are set, that the expansion of $\det(g_{\alpha\bar\beta})$ in the $(z,\bar z)$-coordinates around the origin reads $\det(g_{\alpha\bar\beta})=1+h(z,\bar z)$, where $h(z,\bar z)$ is a power series in $z$, $\bar z$ which contains only mixed terms (i.e. of the form $z^j\bar z^k$, $j\neq 0$, $k\neq 0$). Further, also the expansion of $\dd_0(z)$, given in equation (\ref{bochnercoordinates}), contains only mixed terms, forcing $f+\bar f$ to be zero.
\end{proof}
We recall that a holomorphic  immersion $f\!:U\f\CP^N$ is \emph{full} provided $f(U)$ is not contained in any
$\CP^k$ for $k< N$.

\begin{lem}\label{diastpolin}
Let $g$ be a projectively induced K\"ahler metric on a complex  manifold $(M, g)$
and let  $f:V\f \CP^N$ be the holomorphic immersion such that $f^*g_{FS}=g$.
Assume that  $f$ is full and $g$ admits a   diastasis $\dd_0$
on a neighbourhood  $U$ of  a point $p\in M$, which is rotation invariant with respect to the Bochner's coordinates $z_1, \dots, z_n$ around $p$.
Then there exists an open neighbourhood   $W$ of $p$ such that $\dd_0(z)$  can be written on $W$ as
\begin{equation}\label{diastbochpi}
\dd_0(z)=\log\left(1+\sum_{j=1}^n|z_j|^2+\sum_{j=n+1}^N a_j|z^{m_{h_j}}|^2\right),
\end{equation}
where $a_j>0$ and $h_j\neq h_k$ for $j\neq k$.
\end{lem}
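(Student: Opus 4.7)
The strategy is to combine Calabi's hereditary theorem for the diastasis under K\"ahler immersions with the rotation invariance and fullness hypotheses. By that theorem $\dd_0=f^*\dd_0^{FS}$ on $V$. Fixing an affine chart of $\CP^N$ centered at $f(p)$ with coordinates $w_1,\dots,w_N$, the Fubini--Study diastasis at the origin reads $\dd_0^{FS}(w)=\log\bigl(1+\sum_{j=1}^N|w_j|^2\bigr)$. Setting $f_j:=w_j\circ f$, each $f_j$ is holomorphic with $f_j(p)=0$, so on a sufficiently small neighborhood $W$ of $p$
\[
\dd_0(z)=\log\!\left(1+\sum_{j=1}^N|f_j(z)|^2\right).
\]

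I would then expand $f_j(z)=\sum_{|m|\geq 1}c_{j,m}z^m$ and collect the coefficient vectors $v_m:=(c_{1,m},\dots,c_{N,m})\in\C^N$, which gives
\[
\sum_{j=1}^N|f_j(z)|^2=\sum_{m,m'}\langle v_m,v_{m'}\rangle\,z^m\bar z^{m'}.
\]
Because $\dd_0$, and hence $1+\sum_j|f_j|^2$, is invariant under $z_\alpha\mapsto e^{i\theta_\alpha}z_\alpha$, the coefficient of every $z^m\bar z^{m'}$ with $m\neq m'$ must vanish. This yields the key orthogonality relations $\langle v_m,v_{m'}\rangle=0$ whenever $m\neq m'$, collapsing the sum to $\sum_j|f_j|^2=\sum_m\|v_m\|^2\,|z^m|^2$.

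Next, fullness of $f$ says that no nontrivial affine--linear combination of $f_1,\dots,f_N$ and the constant $1$ vanishes identically; since $f_j(p)=0$, this forces the family $\{v_m\}_{|m|\geq 1}$ to span $\C^N$. Combined with pairwise orthogonality, exactly $N$ of the $v_m$ are nonzero. To pin down which multi-indices of degree one appear, I would invoke Bochner's normalization: since $\log(1+F)=F+O(F^2)$ and the bidegree-$(1,1)$ part of $F$ equals $\sum_\alpha\|v_{e_\alpha}\|^2|z_\alpha|^2$, comparison with \eqref{bochnercoordinates} forces $\|v_{e_\alpha}\|=1$ for $\alpha=1,\dots,n$. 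The remaining $N-n$ nonzero vectors are therefore indexed by distinct multi-indices $m_{h_{n+1}},\dots,m_{h_N}$ of degree $\geq 2$, with positive squared norms $a_j:=\|v_{m_{h_j}}\|^2$; substituting back produces precisely the claimed formula.

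The main hurdle is not analytic but combinatorial: checking that the orthogonality forced by rotation invariance is strong enough to eliminate every off-diagonal $z^m\bar z^{m'}$ and that the unit-norm contributions from $v_{e_1},\dots,v_{e_n}$ match the Bochner linear part exactly. Once those are in place, the polynomial-looking expression inside the logarithm emerges automatically, with no need to perform any unitary change of coordinates on the target $\CP^N$.
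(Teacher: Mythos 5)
Your proposal is correct and follows the same overall strategy as the paper: pull back the Fubini--Study diastasis through an affine chart centered at $f(p)$ via Calabi's hereditary property, then use rotation invariance and fullness to reduce the components of $f$ to monomials. The only place you diverge is the final step. The paper first invokes Calabi's normal form (Theorem 7 of \cite{Cal}) to arrange, after a unitary transformation of $\CP^N$, that the first $n$ components of $f$ are exactly $z_1,\dots,z_n$ and the remaining ones are of order $\geq 2$, and then asserts in a single sentence that rotation invariance plus fullness force those remaining components to be monomials with distinct exponents. You skip the normal form and prove that assertion directly: rotation invariance of $e^{\dd_0}=1+\sum_j|f_j|^2$ kills every off-diagonal coefficient $\langle v_m,v_{m'}\rangle$, fullness makes the $v_m$ span $\C^N$ so that exactly $N$ of them survive, and the Bochner normalization (\ref{bochnercoordinates}) pins the degree-one vectors to unit norm. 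This buys a self-contained justification of the step the paper leaves implicit, at the cost of re-deriving a fragment of Calabi's Theorem 7; both arguments are sound. One small caveat: your closing remark that no unitary change of coordinates on the target is needed is overstated, since centering the affine chart at $f(p)$ so that the Fubini--Study diastasis takes the form $\log\bigl(1+\sum_j|w_j|^2\bigr)$ is itself such a change; what you genuinely avoid is only the further unitary normalization adapted to the Bochner coordinates.
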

\begin{proof}
Up to a unitary transformation of $\CP^N$ and by shrinking $V$ if necessary we can assume $f(p)=[1, 0\dots, 0]$ and $f(V)\subset U_0=\{Z_0\neq 0\}$. Since the affine coordinates on $U_0$ are Bochner's coordinates for the Fubini--Study metric $g_{FS}$ then,   by \cite[Theorem 7 p. 15]{Cal}, $f$
can be written on $W=U\cap V$ as:
$$f:W\f \C^N, z=(z_1, \dots , z_n)\mapsto (z_1, \dots z_n, f_{n+1}(z), \dots , f_N(z)),$$
where
$$f_j(z)=\sum_{k=n+1}^{\infty}\alpha_{jk}z^{m_k}, \ j=n+1, \dots, N.$$
Since the diastasis function is hereditary (see \cite[Prop. 6 p. 4]{Cal} ) and that of  $\CP^N$
around the point $[1, 0\dots, 0]$ is given on $U_0$ by $\Phi(z)=\log(1+\sum_{j=1}^N|z_j|^2)$, where $z_j=\frac{Z_j}{Z_0}$,  one gets
$$\dd_0(z)=\log(1+\sum_{j=1}^n|z_j|^2+\sum_{j=n+1}^N|f_j(z)|^2).$$
The rotation invariance of $\dd_0(z)$ and the fact that $f$ is full  implies that the $f_j$'s are monomial
of $z$ of different degree and formula (\ref{diastbochpi}) follows.
\end{proof}

To simplify the notation, from now on we write $P_{z_\alpha}$ for $\de P/\de z_\alpha$, $P_{\bar z_\beta}$ for $\de P/\de\bar z_\beta$, $P_{z_\alpha \bar z_\beta}$ for $\de^2 P/\de z_\alpha \de\bar z_\beta$ and so on.
\begin{proof}[Proof of Theorem \ref{mainteor1}]
By Lemma \ref{diastpolin} we can assume that there exists $p\in M$ such that the diastasis around $p$ can be written as $\dd_0(z)=\log(P)$ where $P$ is a polynomial in the variables $|z_1|^2,\dots, |z_n|^2$.
By the equality
\begin{equation}
g_{\alpha\bar\beta}=\frac{\de^2 \dd_0(z)}{\de z_\alpha\de\bar z_\beta}=\frac{PP_{ z_\alpha \bar z_\beta}-P_{z_\alpha}P_{\bar z_\beta}}{P^2},\nonumber
\end{equation}
we have
\begin{equation}
\det(g_{\alpha\bar\beta})=\det\left(\frac{PP_{z_\alpha \bar z_\beta}-P_{z_\alpha}P_{\bar z_\beta}}{P^2}\right)=\frac{1}{P^{2n}}\det\left(PP_{z_\alpha \bar z_\beta}-P_{z_\alpha}P_{\bar z_\beta}\right).\nonumber
\end{equation}
Given a polynomial $Q$ in the variables $z_1,\dots,z_n,\bar z_1,\dots,\bar z_n$ we denote by $\deg Q$ the total degree of $Q$ with respect to all the variables $z_1,\dots,z_n,\bar z_1,\dots,\bar z_n$. Then
\begin{equation}
\deg \det\left(PP_{z_\alpha \bar z_\beta}-P_{z_\alpha}P_{\bar z_\beta}\right) \leq 2n\deg P - 2n. \nonumber
\end{equation}
On the other hand from Monge--Amp\`ere Equation (\ref{mongebochner}) we get
\[
\deg \det\left[\left(PP_{z_\alpha \bar z_\beta}-P_{z_\alpha}P_{\bar z_\beta}\right)\right] - 2n \deg  P = -\frac{\lambda}{2} \deg P , \]
which forces $\frac{\lambda}{2} \geq\frac{2n}{deg  P} > 0$. Thus, if $M$ is complete, by Bonnet's Theorem, $M$ is also compact.  Then $M$ is  simply connected by a well-known theorem of Kobayashi \cite{koricci}. 
The final upper bound $ \lambda \leq 2(n+1)$ follows from the following D. Hulin's result.
\end{proof}

\begin{lem}[D. Hulin \cite{hulin}]\label{hulin}
Let $(V,h)$ be a KE manifold which admits a K\"ahler immersion into $\CP^N$. Then it can be extended to a complete n-dimensional KE manifold $(M,g)$ and the Einstein constant is a rational number. Further, let this immersion be full and let the Einstein constant $\lambda=2\frac{p}{q}$ be positive, where $p/q$ is irreducible, then $p\leq n+1$ and if $p= n+1$ (resp. $n=p=2$) then $(M,g)=(\CP^n,qg_{FS})$ (rep. $(M,g)=(\CP^1\times\CP^1,g_{FS}\oplus g_{FS})$).
\end{lem}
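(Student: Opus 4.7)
The plan is to combine Calabi's rigidity for K\"ahler immersions into $\CP^N$ with the cohomological characterization of the Einstein constant via the Fano index of $M$. First I would extend the local immersion $f:V\to\CP^N$ to a global one: passing to the universal cover, I may assume simple connectivity, and Calabi's Theorem 11 of \cite{Cal} then extends $f$ to a K\"ahler immersion on the whole cover, while real-analytic continuation propagates $\rho=\lambda\omega$ so that the extended metric remains KE with the same $\lambda$. Completeness I would obtain by a maximality argument on the domain of extension, exploiting the completeness of the Fubini--Study target; granting positivity $\lambda>0$ (in the spirit of the proof of Theorem \ref{mainteor1} or from Hulin's Theorem A above), Bonnet--Myers yields compactness. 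Then $[\omega]/\pi\in H^2(M;\mathbb{Z})$ as the pullback of the integral generator of $H^2(\CP^N;\mathbb{Z})$, and $c_1(M)=[\rho]/(2\pi)\in H^2(M;\mathbb{Z})$, so the Einstein equation forces $\lambda\in\mathbb{Q}$.

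For the bound $p\leq n+1$ with $\lambda=2p/q$ irreducible, set $\alpha=f^*c_1(\mathcal{O}(1))\in H^2(M;\mathbb{Z})$, so the KE identity reads $c_1(M)=(p/q)\alpha$. Writing $c_1(M)=r\gamma$ with $r=\mathrm{ind}(M)$ the Fano index and $\gamma$ primitive, one has $\alpha=(rq/p)\gamma$; integrality and $\gcd(p,q)=1$ force $p\mid r$, say $r=pk$, whence $\alpha=kq\gamma$. The Kobayashi--Ochiai bound $r=pk\leq n+1$ then gives $p\leq n+1$. Equality $p=n+1$ forces $k=1$ and $r=n+1$, hence $M\cong\CP^n$ by Kobayashi--Ochiai; since $\alpha=q\gamma$, the immersion is the $q$-th Veronese of $\CP^n$, so $g=q\,g_{FS}$. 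The subcase $n=p=2$ gives $r=2=n$, and Kobayashi--Ochiai again identifies $M$ as the quadric $Q_2\cong\CP^1\times\CP^1$, with the Segre embedding realizing $g=g_{FS}\oplus g_{FS}$.

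The main obstacle I anticipate is establishing completeness of the extended metric rigorously, which requires control over the immersion at the boundary of the maximal analytic extension. The key external input beyond the cited Calabi results is the Kobayashi--Ochiai characterization of projective spaces and hyperquadrics by Fano index, used to pin down the two extremal cases; a subsidiary delicate point is confirming that the decomposition $\alpha=kq\gamma$ with $\gamma$ primitive actually realizes the full Fano index of $M$, which requires tracking divisibilities when $f^*\mathcal{O}(1)$ is itself non-primitive.
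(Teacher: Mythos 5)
The paper offers no proof of this lemma: it is quoted as an external result of Hulin \cite{hulin}, so there is nothing internal to compare your argument against. That said, the second half of your sketch --- rationality of $\lambda$ from the integrality of $[\omega]/\pi=f^*c_1(\mathcal{O}(1))$ and of $c_1(M)$, the divisibility $p\mid r$ forced by $c_1(M)=(p/q)\alpha$ with $\gcd(p,q)=1$, the bound $p\le r\le n+1$ via Kobayashi--Ochiai, and the identification of the extremal cases with $\CP^n$ and the quadric $Q_2\cong\CP^1\times\CP^1$ --- is exactly the route of Hulin's original argument, and the divisibility bookkeeping is correct. A minor point: in the case $n=p=2$ your reasoning actually yields $(M,g)=(\CP^1\times\CP^1,q(g_{FS}\oplus g_{FS}))$ for some $q\ge 1$; the absence of the factor $q$ in the quoted statement looks like an imprecision of the statement rather than of your proof, since the $q$-fold Veronese--Segre embedding realizes every such $q$.

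The genuine gaps are in the first half. First, the extension of $(V,h)$ to a \emph{complete} KE manifold is the analytic core of Hulin's paper and cannot be dispatched by ``a maximality argument on the domain of extension'': Calabi's Theorem 11 extends the immersion over a simply connected manifold to which the metric has \emph{already} been extended, but it does not by itself produce a complete ambient $(M,g)$; one must show that the metric and the immersion continue across the boundary of the maximal domain, which Hulin obtains by proving that the germ of a KE submanifold of $\CP^N$ closes up to an algebraic subvariety. You flag this yourself, but it is precisely the part that needs a proof. Second, your derivation of rationality is circular as ordered: the cohomological argument requires $[\omega]\neq 0$ in $H^2(M;\R)$, hence in practice compactness of $M$, which you obtain from Bonnet--Myers only after granting $\lambda>0$; but positivity is a hypothesis only of the \emph{second} assertion of the lemma, while rationality is claimed unconditionally for the complete extension. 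In Hulin's scheme compactness comes first (from algebraicity of the closure), and positivity and rationality are then read off the compact model; without that input your first paragraph does not close.
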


In order to prove Theorem \ref{cod3} we also need:

\begin{lem}\label{gencod}
Let $(M,g)$ be a complete $n$-dimensional rotation invariant KE manifold. If $n>2k$, $(M,g)$ admits a (local) K\"ahler immersion into $\CP^{n+k}$ if and only if $(M,g)=(\CP^n,g_{FS})$.
\end{lem}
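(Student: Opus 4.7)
The plan is to combine the explicit form of the immersion from Lemma~\ref{diastpolin} with the Gauss equation for K\"ahler submanifolds, evaluated at the Bochner center, so as to produce a rank obstruction on the second fundamental form; this will pin the Einstein constant at $\lambda = 2(n+1)$, after which Lemma~\ref{hulin} uniquely identifies $(M,g)$. The ``if'' direction is immediate via a totally geodesic inclusion $\CP^n \hookrightarrow \CP^{n+k}$. For the converse, after replacing the ambient $\CP^{n+k}$ by the smallest projective subspace containing $f(V)$, one may assume $f$ is full of codimension $k'\leq k$, with $n>2k'$ still holding.

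By Lemma~\ref{diastpolin}, in Bochner coordinates $(z_1,\dots,z_n)$ around a point $p$ the immersion is
\[
f(z) = (z_1, \dots, z_n,\, \sqrt{a_1}\, z^{m_1},\, \dots,\, \sqrt{a_{k'}}\, z^{m_{k'}}), \quad f(p) = [1{:}0{:}\cdots{:}0],
\]
with each $|m_j|\geq 2$. Since the Christoffel symbols of $g_{FS}$ vanish at $[1{:}0{:}\cdots{:}0]$ in these affine coordinates, the second fundamental form $h^a_{\alpha\beta}(p)$ is just the ordinary Hessian of $\sqrt{a_a}\,z^{m_a}$ at the origin; it is nonzero precisely when $|m_a|=2$, i.e.\ $m_a = e_\alpha + e_\beta$ (possibly $\alpha=\beta$), and in that case each nonzero column of $h^a$ is supported on $\{\alpha,\beta\}$.

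Next, the Gauss equation for K\"ahler submanifolds in $\CP^{n+k'}$ (constant holomorphic sectional curvature), contracted only over the $n$ directions tangent to $M$, yields $\mathrm{Ric}^M = (n+1)g^M - HH^*$, so the K\"ahler--Einstein condition $\mathrm{Ric}^M = (\lambda/2)g^M$ (in the paper's normalization) gives, at $p$,
\[
\sum_{a,\gamma} h^a_{\alpha\gamma}\,\overline{h^a_{\beta\gamma}} = \kappa\,\delta_{\alpha\beta}, \qquad \kappa := n+1 - \tfrac{\lambda}{2} \geq 0.
\]

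The heart of the argument is a rank count. Every nonzero column of $H$ is a scalar multiple of some $e_\alpha$ with $\alpha\in\mathrm{supp}(m_a)$ for a monomial $m_a$ of degree $2$, so $\mathrm{Im}(H) \subseteq \mathrm{span}\{e_\alpha : \alpha \in \bigcup_{|m_a|=2}\mathrm{supp}(m_a)\}$, a subspace of dimension at most $2k'\leq 2k < n$. If $\kappa>0$ then the right-hand side has full rank $n$, contradicting $\mathrm{rank}(HH^*)<n$; hence $\kappa=0$, $\lambda=2(n+1)$, and writing $\lambda = 2p/q$ in lowest terms forces $p=n+1$, $q=1$, so that Lemma~\ref{hulin} concludes $(M,g)=(\CP^n,g_{FS})$. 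The only delicate point I anticipate is bookkeeping the normalization (the paper's $\rho = \lambda\omega$, equivalent to $\det g = P^{-\lambda/2}$ and $\mathrm{Ric}=(\lambda/2)g$) so that the critical case $\kappa=0$ lines up exactly with the metric $(\CP^n,g_{FS})$ where Hulin's rigidity applies.
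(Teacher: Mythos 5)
Your proof is correct, and it reaches the paper's key identity by a genuinely different route. The paper differentiates the Monge--Amp\`ere equation (\ref{mongebochner}) twice at the origin to obtain the $n$ scalar relations (\ref{deinst}), $4a_h+\sum_{k\neq h}b_{hk}=n+1-\tfrac{\lambda}{2}$, and then argues combinatorially: since at most $k$ of the coefficients $a_i, b_{ij}$ can be nonzero and each appears in at most two of the $n$ equations, some equation is coefficient-free and forces $\lambda=2(n+1)$. Your Gauss-equation identity $\sum_{a,\gamma}h^a_{\alpha\gamma}\overline{h^a_{\beta\gamma}}=\bigl(n+1-\tfrac{\lambda}{2}\bigr)\delta_{\alpha\beta}$ is exactly the same relation in disguise --- the Hessians at $0$ of the degree-two extra components $\sqrt{a_j}\,z_j^2$ and $\sqrt{b_{jk}}\,z_jz_k$ contribute precisely $4a_j$ and $b_{jk}$ to the diagonal of $HH^*$ --- and your rank bound $\mathrm{rank}(HH^*)\leq 2k'<n$ is the same counting step, phrased linear-algebraically rather than equation-by-equation. (Indeed, the shortest version of your argument is the paper's: pick an index $h$ outside the union of the supports of the degree-two monomials; the $(h,h)$ entry of $HH^*$ vanishes, so $\kappa=0$.) What your version buys is conceptual transparency: positive semi-definiteness of $HH^*$ makes the sign of $n+1-\tfrac{\lambda}{2}$ and the role of the degree-two monomials manifest, and it explains geometrically why only those monomials matter (they are the only ones seen by the second fundamental form at $p$). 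What the paper's version buys is self-containment: it stays entirely inside the Monge--Amp\`ere/Bochner framework of Lemma \ref{bochnerKE} and Lemma \ref{diastpolin}, with no appeal to the Gauss equation for K\"ahler submanifolds of complex space forms. Your normalization bookkeeping is consistent: with the paper's convention $\det(g_{\alpha\bar\beta})=e^{-\frac{\lambda}{2}\dd_0}$ one has $R_{\alpha\bar\beta}=\tfrac{\lambda}{2}g_{\alpha\bar\beta}$ and holomorphic sectional curvature $c=2$ for $g_{FS}$, so $\kappa=n+1-\tfrac{\lambda}{2}$ agrees with (\ref{deinst}), and the final appeal to Lemma \ref{hulin} with $p=n+1$, $q=1$ is exactly the paper's conclusion.
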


\begin{proof}
By Lemma \ref{diastpolin} there exist a point $p\in M$ and local coordinates $z_1,\dots,z_n$ around it such that the diastasis function $\dd_0(z)$ for $g$ centered at $p$ can be written as
\begin{equation}
\begin{split}
\dd_0(z)=\log(1&+\sum_{j=1}^n|z_j|^2+\sum_{j=1}^na_j|z_j|^4+\\
&+\sum_{1\leq j<k\leq n}b_{jk}|z_j|^2|z_k|^2+\psi_{3,3}),\nonumber
\end{split}\nonumber
\end{equation}
where $\psi_{3,3}$ is a (rotation invariant) polynomial of degree not less than three both in $z$ and in $\bar z$.
For $h=1,\dots, n$, deriving with respect to $z_{h}$ and $\bar z_h$ both sides of the Monge--Amp\`ere Equation (\ref{mongebochner}) and evaluating at $0$, i.e. by considering the $n$ equalities
\begin{equation}
\frac{\de^2}{\de z_h\de\bar z_h}\left(\det(\frac{\de^2 \dd_0(z)}{\de z_j\de\bar z_k})\right)\big|_0=\frac{\de^2}{\de z_h\de\bar z_h}\left(e^{-\frac{\lambda}{2}\dd_0(z)}\right)\big|_0,\ (h=1,\dots, n),\nonumber
\end{equation}
we get $n$ equations of the form
\begin{equation}\label{deinst}
4a_h+\sum_{\substack{k=1\\ k\neq h}}^nb_{hk}-(n+1)=-\frac{\lambda}{2}, \quad (h=1,\dots, n),
\end{equation}
in which $b_{ij}=b_{ji}$. Thus, if the codimension of $M$ into $\CP^N$ is $k<n/2$ at least one of these equations is of the form $\lambda=2(n+1)$, because the polynomial $P$ consists of only $n+k+1$ monomials and then at most $k$ of the variables $\{a_i, b_{ij}\}_{\substack{1\leq i\leq n\\ i<j\leq n}}$ can be different from $0$. Therefore the thesis follows by Lemma \ref{hulin}.
\end{proof}

We are now in the position of proving Theorem \ref{cod3}.

\begin{proof}[Proof of Theorem \ref{cod3}]
By virtue of Lemma \ref{hulin}, we can consider directly the case where $(M,g)$ is a complete KE projectively induced and rotation invariant manifold.\\
By Lemma \ref{diastpolin} there exist a point $p\in M$ and local coordinates $z_1,\dots, z_n$ such that the diastasis function $\dd_0(z)$ around $p$ can be written as
\begin{equation}
\begin{split}
\dd_0(z)=\log\left(1+\sum_{j=1}^n|z_j|^2 \right.+&\sum_{j=1}^na_j|z_j|^4+ \sum_{1\leq j<k\leq n}b_{jk}|z_j|^2|z_k|^2+\\
+&\left.\sum_{j=1}^nc_j|z_j|^6+\sum_{j,k=1 (j\neq k)}^nd_{jk}|z_j|^2|z_k|^4+\psi_{4,4}\right),\nonumber
\end{split}
\end{equation}
where $\psi_{4,4}$ is a rotation invariant polynomial of degree not less than four both in $z$ and in $\bar z$.
If $k=0$, we have $\dd_0(z)=\log(1+\sum_{j=1}^n|z_j|^2)$ and $(M,g)=(\CP^n,g_{FS})$. The statement holds by Theorem \ref{ct} for those case where the codimension is equal to $1$ or $2$  . Thus let $k=3$. By Lemma \ref{gencod}, the statement is true for $n>6$. Here we need to analyze the cases $n\leq 6$ separately.

Consider the case $n=2$. As in the proof of Lemma \ref{gencod}, for $h=1,2$, by deriving with respect to $z_h$, $\bar z_h$ both sides of the Monge--Amp\`ere Equation (\ref{mongebochner}), and evaluating at $0$ we get the $2$ equations
\begin{equation}
\begin{split}
 4 a_1+b_{12}& =3-\frac{\lambda}{2},  \\
 4 a_2+b_{12}& =3-\frac{\lambda}{2},\nonumber
\end{split}
\end{equation}
from which follows immediately $a_1=a_2$ and $b_{12} =3-\frac{\lambda}{2}-4a_1$.
If $a_1=a_2=b_{12}=0$, we get $\lambda=6$, and by Lemma \ref{hulin} $(M,g)=(\CP^2,g_{FS})$. If $a_1\neq0$, $b_{12}\neq 0$, since the codimension is $3$, all the other coefficients must vanish, thus by deriving both sides of the Monge--Amp\`ere Equation (\ref{mongebochner}) by $z_1$, $\bar z_1$, $z_2$, $\bar z_2$, and twice by $z_1$ and $\bar z_1$, evaluating at zero, we get a system, whose unique acceptable solution is
$b_{12}=1/2$, $a_1=1/4$ and $\lambda=3$, that is $(M,g)=(\CP^2,2g_{FS})$. If $a_1\neq 0$ but $b_{12}=0$, by deriving again both sides of the Monge--Amp\`ere Equation (\ref{mongebochner}) by $z_1$, $\bar z_1$, $z_2$, $\bar z_2$ and evaluating at zero and at $\lambda=2(3-4a_1)$ we have $4a_1+4d_{12}+4d_{21}=0$, impossible, since $a_1\neq 0$ and all the coefficients must be non negative. It remains to consider the case $b_{12}\neq 0$, $a_1=a_2=0$. By deriving the Monge--Amp\`ere Equation twice by $z_j$ and twice by $\bar z_j$ (for $j=1,2$), evaluating at zero and at $a_1=a_2=0$, $b_{12}=3-\frac{1}{2}\lambda$, we get $d_{12}=9c_1$ and $d_{21}=9c_2$. Since the codimension is $3$, only two of them can be different from zero. If they are all zero, we get $\lambda=6$ and by Lemma \ref{hulin} $(M,g)=(\CP^2,g_{FS})$ or $\lambda=4$ and again by Lemma \ref{hulin} $(M,g)=(\CP^1\times\CP^1,g_{FS}\oplus g_{FS})$. Let us suppose that two of them are different from zero, for example $d_{12}\neq 0$. Then all the terms of higher order vanish, and taking the third order derivative we get again $\lambda=6$ or $\lambda=4$.

The case $n=3$ is very similar to that one. The system given in the proof of Lemma \ref{gencod} reads
\begin{equation}
\begin{split}
 4 a_1+b_{12}+b_{13} & =4-\frac{\lambda}{2},  \\
  4 a_2+b_{12}+b_{23}& =4-\frac{\lambda}{2},   \\
4  a_3+b_{13}+b_{23} & =4-\frac{\lambda}{2}. \\
\end{split}\nonumber
\end{equation}
It is easy to see that only three cases do not reduce immediately to $(M,g)=(\CP^3,g_{FS})$, that is $a_1=a_2=a_3\neq 0$, or $a_1=b_{23}\neq 0$ (and all the symmetric to them), or $b_{12}=b_{23}=b_{13}\neq 0$. By taking the second order derivative of the Monge--Amp\`ere Equation  and evaluating at zero, it follows that these cases are incompatible.

If $n=4$, it easy to see from the system of linear equation
\begin{equation}
\begin{split}
 4 a_1+b_{12}+b_{13}+b_{14} & =5-\frac{\lambda}{2},\\
  4 a_2+b_{12}+b_{23}+b_{24}& =5-\frac{\lambda}{2},\\
4  a_3+b_{13}+b_{23}+b_{34} & =5-\frac{\lambda}{2},\\
4 a_4+b_{14}+b_{24}+b_{34}  & =5-\frac{\lambda}{2},
\end{split}\nonumber
\end{equation}
that, up to symmetries, only the following cases may occur:
all the coefficients are equal to zero, $a_1=a_2=b_{34}\neq 0$ or $b_{12}=b_{34}\neq 0$. In the third case , without loss of generality, we suppose that $d_{23}=d_{32}=0$. Therefore if the second or third case holds, by deriving both sides of the Monge--Amp\`ere Equation with respect to $z_2$, $z_3$, $\bar z_2$, $\bar z_3$, evaluating at zero and considering the relation above, we get $b_{34}=0$, and conclusion follows.

The cases $n=5$ and $n=6$ are very similar to that one. For $n=5$, by the system of linear equation either the coefficients of the system are zero, or up to symmetries $4a_1=b_{23}=b_{45}\neq 0$. Deriving with respect to $z_2$, $z_4$, $\bar z_2$, $\bar z_4$ the Monge--Amp\`ere Equation and evaluating at zero, one gets $b_{23}=0$. For $n=6$, from the system of linear equation one gets that either the coefficients are all zero or $b_{12}=b_{34}=b_{56}\neq 0$. By deriving with respect to $z_2$, $z_4$, $\bar z_2$, $\bar z_4$ and evaluating at zero, one gets $b_{34}=0$, and we are done. 
\end{proof}

\vspace{0.2cm}

\section*{Acknowledgements}
The author wishes to thank prof. Andrea Loi for many helpful discussions and for his advice concerning various aspects of this work.

\vspace{0.1cm}

\small{}

\end{document}